\newtheorem{theorem}{Theorem}
\newtheorem{lemma}{Lemma}
\newtheorem{corollary}{Corollary}
\newtheorem{remark}{Remark}
\title{An Upper Bound on Burning Number of Graphs}
\author{
Max Land \thanks{Dutch Fork High School, Irmo, SC 29063,
({\tt max.ruikang.land@gmail.com}).}
\and 
Linyuan Lu
\thanks{University of South Carolina, Columbia, SC 29208,
({\tt lu@math.sc.edu}). This author was supported in part by NSF
grant DMS 1300547.}
}
\begin{document}
\maketitle
\begin{abstract}
  The burning number $b(G)$ of a graph $G$ was introduced by Bonato, Janssen, and Roshanbin [Lecture Notes in Computer Science 8882 (2014)] for measuring the speed of the spread of contagion in a graph. They proved for any connected graph $G$ of order $n$,
$b(G)\leq 2\lceil \sqrt{n} \rceil-1$, and 
conjectured  that $b(G)\leq \lceil \sqrt{n} \rceil$. In this paper, we proved  $b(G)\leq \lceil\frac{-3+\sqrt{24n+33}}{4}\rceil$, which is roughly $\frac{\sqrt{6}}{2}\sqrt{n}$. We also settled the following conjecture of Bonato-Janssen-Roshanbin: $b(G)b(\bar G)\leq n+4$ provided both $G$ and $\bar G$ are connected.
\end{abstract}
\section{Introduction}
The burning number of a graph was introduced by
Bonato-Janssen-Roshanbin \cite{burn1,burn2, Ros}. It is related to the contact
processes on graphs such as the Firefighter problem \cite{BDGS,FKMR,FM}.
In the paper \cite{burn1, burn2}, Bonato-Janssen-Roshanbin considered a graph
process which they called {\em burning}. At the beginning of the
process, all vertices are {\em unburned}. During each round, one may
choose an unburned vertex and change its status to {\em burned}. At
the same time, each of the vertices that are already burned, will
remain burned and spread to all of its neighbors and change their
status to burned. A graph is called {\em $k$-burnable} if it can be
burned in at most $k$ steps. The {\em burning number} of a graph $G$, denoted by
$b(G)$, is the minimum number of rounds necessary to burn all vertices
of the graph. For example, $b(K_n)=2$, $b(P_4)=2$, and $b(C_5)=3$. In
the paper \cite{burn2}, they proved $b(P_n)= \lceil n^{1/2}\rceil$. 
Based on this result,  Bonato-Janssen-Roshanbin \cite{burn2}
  made the following conjecture.

{\bf Conjecture 1:} {\it for any connected
graph $G$ of order $n$, $b(G) \leq \lceil n^{1/2}\rceil$.}

Bonato-Janssen-Roshanbin \cite{burn1, burn2} proved
 $b(G)\leq 2\lceil n^{1/2}\rceil-1$. 
The previous best known bound is due to Bonato et al. \cite{burn4}:
$$b(G)\leq \left(\sqrt{\frac{32}{19}}+o(1)\right )\sqrt{n}.$$
In this paper, we improved the upper bound of $b(G)$ as follows.
\begin{theorem}\label{t1}
  If $G$ is a connected graph of order $n$, then $$b(G)\leq
\left \lceil\frac{-3+\sqrt{24n+33}}{4}\right
\rceil.$$
\end{theorem}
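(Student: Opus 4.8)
The plan is to recast burning as a ball-covering problem and then bound the cover by a peeling induction on the number of rounds $k$. First I would record the standard reformulation: a graph $H$ is $k$-burnable if and only if there exist vertices $x_1,\dots,x_k$ with $V(H)=\bigcup_{i=1}^{k} B(x_i,k-i)$, where $B(x,r)$ is the closed $r$-ball; equivalently (reindexing) $V(H)$ is covered by balls of radii $0,1,\dots,k-1$. Since adding edges only speeds up the spread, $b(G)\le b(T)$ for any spanning tree $T$ of $G$ (a schedule burning $T$ burns $G$ in at most as many rounds because $d_G\le d_T$). Hence it suffices to prove the bound for trees, where the geometry of balls is rigid: a ball is a connected subtree, and along a longest path $p_0p_1\cdots p_D$ every branch hanging off $p_j$ has depth at most $\min(j,D-j)$.

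Rearranging the claimed inequality, $k=\lceil(-3+\sqrt{24n+33})/4\rceil$ is the least integer with $2k^2+3k-3\ge 3n$, so the statement to prove is that every tree on at most $(2k^2+3k-3)/3$ vertices is $k$-burnable. I would prove this by induction on $k$, using the largest available ball (radius $k-1$) to ``secure'' a connected chunk near a deepest leaf and then applying the inductive hypothesis with radii $0,\dots,k-2$ to the remaining subtree. Concretely, root $T$ at one end of a longest path, let $u$ be a deepest leaf, and center the radius-$(k-1)$ ball at the ancestor $w$ of $u$ lying $k-1$ levels above $u$; the subtree below $w$ then has height at most $k-1$ and is entirely burned, and removing a carefully chosen connected piece inside $B(w,k-1)$ leaves a subtree to which the induction applies. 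Letting $N(k)$ denote the largest order for which every tree is guaranteed $k$-burnable, the induction should yield a recurrence of the form $N(k)\ge N(k-1)+c(k-1)$, with the constants tuned so that $c=\tfrac{4}{3}$ on average, giving $N(k)\ge \tfrac{2}{3}k^2+O(k)$, i.e. exactly the stated threshold after the small-order base cases and the ceiling are checked.

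The main obstacle is the quantitative heart of the peeling step: guaranteeing that a single radius-$r$ ball can always be made to account for about $\tfrac{4}{3}r$ vertices while keeping the remainder connected. The two extremes compete: if the deepest part of the tree is a bare pendant path then the ball meets only about $r$ new vertices, and extending the removal upward risks severing sibling branches, whereas if the tree is bushy there are many vertices but the longest-path depth bound limits how far the ball reaches. Establishing that the worst trade-off still yields efficiency at least $\tfrac{2}{3}$ (equivalently, that one never needs to cover more than roughly $\tfrac{3}{2}n$ ``length units'') is where the $\sqrt{6}/2$ constant is born, and getting the exact additive constants right will require a careful case analysis of the local structure at $w$ (pendant path versus branching) together with an honest treatment of the base cases $k=1,2$ and the effect of the ceiling. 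I expect this efficiency lemma, rather than the reformulation or the reduction to trees, to be the crux of the argument.
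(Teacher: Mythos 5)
Your reduction to spanning trees, the ball-covering reformulation, and even the exact target threshold (every tree on at most $\frac{2k^2+3k-3}{3}$ vertices is $k$-burnable) all match the paper, but the inductive mechanism you propose has a genuine gap, and in the form stated it cannot be repaired. You induct on plain $k$-burnability, peeling the largest ball (radius $k-1$) near a deepest leaf $u$ and passing the remainder to the hypothesis for $k-1$. Since $\frac{2k^2+3k-3}{3}-\frac{2(k-1)^2+3(k-1)-3}{3}=\frac{4k+1}{3}$, this step forces exactly the ``efficiency lemma'' you flag as the crux: the radius-$(k-1)$ ball must always remove at least $\frac{4k+1}{3}$ vertices while keeping the remainder connected. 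That is false. If the deepest part of the tree is a bare pendant path (say, hanging from a hub that carries several such paths), the subtree rooted at the ancestor $k-1$ levels above $u$ is a path of exactly $k$ vertices, and $k<\frac{4k+1}{3}$; centering the largest ball any higher fails to cover $u$, and the next complete subtree up (rooted at the hub) has radius $k$, so it fits in no ball of radius $k-1$. Thus peeling the largest ball yields only the recurrence $N(k)\ge N(k-1)+k$, i.e.\ the weaker $\sqrt{2n}$-type bound of the paper's Lemma \ref{l1} and Corollary, not the claimed theorem.

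The paper escapes precisely this obstruction by two ideas absent from your outline. First, it strengthens the induction hypothesis to arbitrary sets of \emph{distinct} radii: for $A=\{a_1<\cdots<a_k\}$, any tree of order at most $\left(\sum_{i=1}^k a_i\right)+a_k-1+\lfloor\frac{k^2-3k+2}{6}\rfloor$ is $A$-burnable (Theorem \ref{t3}). This is essential because the ball that gets peeled is \emph{not} the largest one, so the surviving radius set is not an interval and an induction phrased only for $[k]$ cannot close. Second, instead of a single-ball efficiency bound it uses a dichotomy: with $j=\lfloor\frac{k-1}{3}\rfloor$, a pigeonhole argument (Lemma \ref{l2}) supplies a middle radius $a_i$ with $2j\le a_i\le a_{k-1}-j$; either the subtree rooted at the vertex $t$ at height $a_i-1+j$ above $u$ lies inside $N_{a_i-1}[v_i]$, so peeling the \emph{middle} ball removes at least $a_i+j$ vertices (this is exactly your pendant-path case, handled at efficiency better than one by a smaller ball), or some vertex $z$ of that subtree escapes the ball, in which case the subtree rooted at the ancestor of $u$ at height $a_{k-1}-1$ contains both the path to $u$ and the branch to $z$, so peeling the \emph{second-largest} ball removes at least $a_{k-1}+j$ vertices. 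Either way one step gains an extra $j\approx k/3$, and summing these gains via $\sum_{i=1}^k\lfloor\frac{i-1}{3}\rfloor=\lfloor\frac{k^2-3k+2}{6}\rfloor$ produces the threshold $\frac{2k^2+3k-3}{3}$. Without the generalized $A$-burnable statement and this two-case analysis, your proposal stops exactly at the step you yourself identify as unresolved, and the uniform ``$\frac{4}{3}r$ per ball'' lemma you hope for is not true.
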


In the paper \cite{burn2}, Bonato, Janssen, and Roshanbin
also considered Nordhaus-Gaddum Type
problem on the burning number. Let $\bar G$ be the complement graph
of the graph $G$. In \cite{burn2}, they proved $ b(G) + b(\bar G)\leq n + 2$ and $b(G)
b(\bar G)\leq 2n$. Both bounds are tight and are achieved by the
complete graph and its complement. When both graphs $G$ and $\bar G$
are connected, they proved $b(G) + b(\bar
G)\leq 3\lceil n^{1/2}\rceil -1$ and $b(G) b(\bar
G)\leq n+6$ for all graph $G_n$ of order $n\geq 6$. 
The following conjecture has been made in \cite{burn2}:

{\bf Conjecture 2:} {\it If both $G$ and $\bar G$ are connected graphs of order $n$,
then $b(G)b(\bar G)\leq n+4$.}

Using Theorem \ref{t1},  we settled this conjecture positively.

\begin{theorem}\label{t2} 
If both $G$ and $\bar G$ are connected graphs of order $n$, then $$b(G) b(\bar
G)\leq n+4.$$
The equality holds if and only if $G=C_5$.
\end{theorem}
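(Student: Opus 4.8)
The plan is to bound the product by pinning \emph{one} of the two factors to the absolute constant $3$ and bounding the other by Theorem~\ref{t1}. The burning process gives the basic estimate $b(H)\le \mathrm{rad}(H)+1$ for a connected graph $H$: igniting a center in round $1$ and letting that single fire spread reaches every vertex within $\mathrm{rad}(H)+1$ rounds. In particular $\mathrm{rad}(H)\le 2$ forces $b(H)\le 3$. Thus the whole theorem reduces to showing that $G$ and $\bar G$ cannot simultaneously have radius at least $3$, after which Theorem~\ref{t1} controls the remaining factor.

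The crux, and the main obstacle, is the radius lemma: \emph{if $G$ and $\bar G$ are both connected, then $\min(\mathrm{rad}(G),\mathrm{rad}(\bar G))\le 2$.} I would argue by contradiction, assuming $\mathrm{rad}(G)\ge 3$ and $\mathrm{rad}(\bar G)\ge 3$. Since $\mathrm{rad}(G)\ge 3$ gives $\mathrm{diam}(G)\ge 3$, I pick $u,v$ with $d_G(u,v)\ge 3$, so that $u,v$ are non-adjacent in $G$ and have no common $G$-neighbor. Because $\mathrm{rad}(\bar G)\ge 3$, the eccentricity of $u$ in $\bar G$ is at least $3$, so some vertex $z$ satisfies $d_{\bar G}(u,z)\ge 3$; hence $u\sim_G z$, and $u,z$ have no common neighbor in $\bar G$, which says exactly that $V\setminus\{u,z\}\subseteq N_G(u)\cup N_G(z)$. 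Now $v\notin N_G(u)$ and $v\ne z$, so $v\in N_G(z)$, making $z$ a common $G$-neighbor of $u$ and $v$ — contradicting $d_G(u,v)\ge 3$. The argument is short once found, but locating this distance-$3$/eccentricity interplay is the real content.

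With the lemma in hand I may assume, by the symmetry $G\leftrightarrow\bar G$, that $\mathrm{rad}(G)\le 2$, so $b(G)\le 3$. Applying Theorem~\ref{t1} to the connected graph $\bar G$ yields $b(\bar G)\le f(n):=\lceil(-3+\sqrt{24n+33})/4\rceil$, whence $b(G)b(\bar G)\le 3f(n)$. It then remains to verify the routine analytic inequality $3f(n)\le n+4$. Using $f(n)<(-3+\sqrt{24n+33})/4+1$, this reduces after squaring to $16(n-8)(n+1)\ge 0$, valid for all $n\ge 8$; for $5\le n\le 7$ one has $f(n)=3$, so $3f(n)=9\le n+4$ directly, and the cases $n\le 4$ (where only $P_4$ has both $G,\bar G$ connected) are checked by hand.

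For the equality characterization I would track where slack is lost. The inequality $3f(n)\le n+4$ is strict for every $n\ge 6$ and is an equality only at $n=5$, where $f(5)=3$. Hence $b(G)b(\bar G)=n+4$ forces $n=5$ and $b(G)=b(\bar G)=3$. On five vertices, any vertex of degree at least $3$ lets one burn in two rounds, so $b(H)=3$ requires maximum degree at most $2$; the only connected such graphs are $P_5$ and $C_5$, and since $\bar{P_5}$ has a vertex of degree $3$ (giving $b(\bar{P_5})=2$), only $C_5=\bar{C_5}$ survives, with $b(C_5)b(\bar{C_5})=3\cdot 3=9=5+4$. This isolates $G=C_5$ as the unique equality case.
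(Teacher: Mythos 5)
Your proposal is correct and takes essentially the same route as the paper: a radius lemma showing $G$ and $\bar G$ cannot both have radius at least $3$ (the paper's Lemma~\ref{l4}, proved there directly rather than by contradiction), which pins one factor to at most $3$; then Theorem~\ref{t1} bounds the other factor, the same quadratic reduction $16(n-8)(n+1)\geq 0$ handles large $n$, and the equality case is forced to $C_5$ via the maximum-degree-at-most-$2$ observation. The only cosmetic differences are your contradiction-style proof of the radius lemma (using a third vertex $z$) versus the paper's direct argument that the endpoint $v$ of a distance-$3$ pair has eccentricity at most $2$ in $\bar G$, and your $P_5$/$C_5$ enumeration in place of the paper's degree-sum argument.
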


\section{Notations and Lemmas}
For each positive integer $k$, let $[k]$ denote the set $\{1,2,\ldots, k\}$.
A graph $G=(V,E)$ consists of a set of vertices $V$ and edges $E$.
The {\em order} of $G$, dented by $|G|$, is the number of vertices in $G$.
A graph $G$ is called {\em connected} if for any two vertices 
there is a path connecting them. In this paper, we always assume
that $G$ is a connected graph.
The {\em distance} between two vertices $u$ and $v$, denoted by $d(u,v)$,
 is the length of the shortest
path from $u$ to $v$ in graph $G$. The {\em eccentricity} 
of a vertex $v$ is the maximum distance between $v$ and any other vertex in $G$.
The maximum eccentricity is the {\em diameter} $D(G)$ while the minimum eccentricity is
the {\em radius} $r(G)$. The {\em center} of $G$ is the set of vertices of 
eccentricity equal to the radius.

For any nonnegative integer $k$ and a vertex $u$, the {\em $k$-th closed
neighborhood}
 of $u$ is the set of vertices whose distance from $u$ is
at most $k$; denoted by $N_k[u]$. From the definition, a graph $G$ is 
$k$-burnable if there is a {\em burning sequence} $v_1,\ldots, v_k$ of vertices
such that
\begin{align}
  \label{eq:1}
  &V\subset \cup_{i=1}^kN_{k-i}[v_{i}] \\
\label{eq:2}
 & \forall i,j \in [k]\colon d(x_i,x_j)\geq j-i.
\end{align}
The burning number $b(G)$ is the smallest integer $k$ such that $G$ is $k$-burnable.
It has been shown that Condition \eqref{eq:2} is redundant for the definition of
burning number $b(G)$ (see Lemma 1 of \cite{burn4}). It is often convenient to
rewrite Condition \eqref{eq:1} by relabeling the vertices in the burning sequence
as follows:
\begin{align}
  \label{eq:3}
V\subset \cup_{i=1}^kN_{i-1}[v_{i}].
\end{align}

This leads the following generalization, which 
is very useful for the purpose of induction.
 For a set (or multiset) $A$ of $k$ positive integers
$a_1,a_2,\ldots,a_k$ (not necessarily all distinct),
we say a graph $G$ is {\em $A$-burnable},
if there exist $k$ vertices $v_1,v_2,\ldots,v_k$ such
that $G\subseteq \cup_{i=1}^kN_{a_i-1}[v_{i}].$ 
Under this terminology, the burning number $b(G)$ is the least $k$ so
that $G$ is $[k]$-burnable. 

A {\em tree} is an acyclic connected graph. For any tree $T$, it is well-known
that the center of $T$ consists of either one vertex or two vertices. If the center of 
$T$ consists of one vertex, then
 $D(T)=2r(T)$; otherwise, $D(T)=2r(T)-1$. (See \cite{BLW}.)

A {\em rooted tree} is a tree with one vertex $r$ designated as the {\em root}.
The {\em height} of a rooted tree is the eccentricity of the root.
In a rooted tree, the {\em parent} of a vertex is the vertex connected to it on the path to the root. A {\em child} of a vertex $v$ is a vertex of which $v$ is the parent. 
A {\em descendent} of any vertex $v$ is any vertex which is either the child of $v$ or is (recursively) the descendent of any of the children of $v$. A {\em leaf} vertex
is a vertex with degree $1$ but not equal to the root. The {\em subtree rooted at
$v$} is the induced subgraph on the set of $v$ and its all descendents.
The important observation is that if a subtree rooted at $v$ is pruned from the whole
tree, the remaining part (if non-empty) is still a tree. This observation is very useful
for induction. 

A {\em spanning} tree of graph $G$ is a subtree of $G$ that covers all
vertices of $G$. In the papers \cite {burn1, burn2}, Bonato, Janssen, and Roshanbin
 proved
\begin{equation}
  \label{eq:3}
  b(G)=\min\{b(T): T \mbox { is a spanning subtree of } G\}.
\end{equation}
Thus, it is sufficient to only consider the burning
number $b(T)$ for tree $T$.

First we prove a simple lemma, which illustrates the idea of the induction.
\begin{lemma}\label{l1}
  Let $A=\{a_1,a_2,\ldots,a_k\}$ be a  set of $k$ nonnegative
  integers. If a tree $T$ has order at most $\sum_{i=1}^k a_i +\max\{a_i\colon
1\leq i\leq k\} -1$,
  then $T$ is $A$-burnable.
\end{lemma}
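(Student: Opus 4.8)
The plan is to induct on $k=|A|$, where the single most important idea is to \emph{reserve the largest element of $A$ until the very end}. Write $a=\max\{a_i\}$. For the base case $k=1$, with $A=\{a\}$, I must show that any tree $T$ with $|T|\le 2a-1$ lies inside a single ball $N_{a-1}[v]$, i.e. that $r(T)\le a-1$. This follows from the center property quoted above: since $D(T)=2r(T)$ or $D(T)=2r(T)-1$, we get $r(T)=\lceil D(T)/2\rceil\le \lceil (|T|-1)/2\rceil\le a-1$, so taking $v$ to be a center vertex finishes the base case.

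For the inductive step ($k\ge 2$) I first dispose of a trivial case: root $T$ at any vertex $r$ and let $h$ be its height. If $h\le a-1$ then $T\subseteq N_{a-1}[r]$, so one ball of radius $a-1$ (available since $a\in A$) already covers $T$ and the remaining $k-1$ balls may be placed arbitrarily. So assume $h\ge a$. Now choose an element $c\in A$ to delete \emph{other than a unique maximum}: because $|A|\ge 2$, I can always pick $c\in A$ so that $\max(A\setminus\{c\})=a$ is still attained (delete a copy of $a$ if it occurs at least twice, otherwise delete any smaller element). The point of this choice is the budget bookkeeping: deleting $c$ leaves $A'=A\setminus\{c\}$ with $\sum A'+\max A' -1=\bigl(\sum A+a-1\bigr)-c$, so the allowed order drops by exactly $c$.

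The pruning step then needs a subtree that one ball $N_{c-1}[\,\cdot\,]$ can absorb while deleting at least $c$ vertices. Here I take a deepest vertex $u$ (at depth $h\ge a\ge c$) and let $v$ be its ancestor at distance $c-1$; the subtree $T_v$ rooted at $v$ then has height at most $c-1$ because $u$ is globally deepest, so $T_v\subseteq N_{c-1}[v]$, and $T_v$ contains the $c$ vertices of the path from $v$ to $u$, so $|T_v|\ge c$. Pruning $T_v$ leaves a tree $T'=T\setminus T_v$ with $|T'|\le |T|-c\le \sum A'+\max A'-1$, so the induction hypothesis applies to $(T',A')$ and yields $k-1$ balls covering $T'$; together with $N_{c-1}[v]$ these cover all of $T$, proving $T$ is $A$-burnable.

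The crux — and the step I expect to be the real obstacle — is precisely the choice of which element to delete. Deleting the maximum $a$ when it is unique would shrink the budget by $2a-b$ (where $b$ is the second largest value), since the ``$+\max$'' term itself drops from $a$ to $b$; a minimal pruned subtree of height $c-1$ only guarantees $c$ vertices, which would then be too few to keep the invariant. Reserving the maximum and deleting a non-maximum element instead makes the budget decrease by exactly $c$, which the minimal subtree of size $c$ matches on the nose, so the hypothesis $|T|\le \sum A+\max A-1$ is preserved at every step and collapses to the base case $A=\{a\}$ at the end.
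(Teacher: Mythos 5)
Your proof is correct and takes essentially the same approach as the paper: induct on $k$, cover the whole tree with one ball of radius $\max A-1$ when the height is small, and otherwise prune the subtree rooted at the ancestor at distance $c-1$ above a deepest vertex, which is contained in one ball $N_{c-1}[v]$ and has at least $c$ vertices, then apply the inductive hypothesis to the rest. The only (cosmetic) difference is the choice of pruning element: the paper always deletes the \emph{smallest} element $a_k$, which automatically satisfies your criterion of never deleting a unique maximum, whereas you allow any non-unique-maximum element.
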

\begin{proof}
With loss of generality, we can assume that $a_1\geq a_2\geq \cdots \geq a_k$.
  We will use induction on $k$. Initial case: $k=1$, $A=\{a_1\}$. 
We need to prove that if a  tree $T$ has at most $2a_1-1$ vertices, then $T$ is
   $A$-burnable. Note that 
$$r(T)\leq \frac{D(T)+1}{2}\leq \frac{n}{2}\leq a_1-\frac{1}{2}.$$
Since the radius $r(T)$ is an integer, we must have $r(T)\leq a_1-1$.
Thus $T$ is $\{a_1\}$-burnable. 


Now we assume the statement holds for any set of $k-1$ integers. For
any $A$ of $k$ integers $a_1\geq a_2\geq \cdots \geq a_k>0$
 and any tree $T$ with at most $2a_1+a_2+\cdots+a_{k}-1$,
  we will prove that $T$ is $A$-burnable. Pick an arbitrary vertex $r$ as
  the root of $T$.
Let $h$ be the height of this rooted tree. If
$h\leq a_1-1$, then $V(T)\subset N_{a_1-1}(r)$. I.e., $T$ is $\{a_1\}$-burnable.
Thus $T$ is $A$-burnable.

Now we assume $h\geq a_1$.  Select a leaf vertex $u$ such that $d(r,u)=h$.
Let $v_k$ be the vertex on the $ru$-path such
  that the distance $d(u,v_k)=a_k-1$. (This is possible since $h\geq a_1>a_k-1$.
 Let $T_1$ be the subtree rooted
  at $v_k$, and $T_2:=T\setminus T_1$ be the remaining subtree. Notice that $|T_1|\geq
  a_k$. Thus,
  \begin{align*}
    |T_2|&= |T|-|T_1|\\
&\leq 2a_1+a_2+\cdots+a_{k}-1-a_k\\
&=  2a_1+a_2+\cdots+a_{k-1}-1.
  \end{align*}
By inductive hypothesis, $T_2$ is $\{a_1,a_2,\ldots,a_{k-1}\}$-burnable.
 Thus, there exists $k-1$ vertices $v_1,v_2,\ldots,v_{k-1}$ such
that $T_2\subseteq \cup_{i=1}^{k-1}N_{a_i-1}[v_{i}].$
 Also, notice $T_1\subseteq N_{a_k-1}[v_{k}]$. Therefore, $T\subseteq
 \cup_{i=1}^{k-1}N_{a_i-1}[v_{i}].$ The proof of the lemma is finished.
\end{proof}

\begin{remark}
  The bound in Lemma \ref{l1} is tight. 
\end{remark}
\begin{proof}
  Consider the following example: for any positive integer $a$, let $a_1=a_2=\cdots=a_k=a$,  i.e. $A$ is a multiset consisting of $k$ $a$'s. 
Now we will construct a tree $T$ as following. First construct $k+1$
disjoint paths $P_0,P_1,\ldots,P_k$ with each of order $a$. Create
tree $T$ by connecting one endpoint of $P_1,P_2,\ldots,P_k$ to the
same endpoint of $P_0$ (see figure below).

\begin{center}
  \begin{tikzpicture}[scale= 0.5, vertex/.style={circle,scale=0.5,
 draw=black,
      fill=white}]
    \node at (0,0)[vertex] (v0){};
 \node at (-0.5,-1)[vertex] (v1){};
 \node at (-1,-2)[vertex] (v2){};
 \node at (-2,-
4)[vertex] (v3){}; 
\draw (v0)--(v1)--(v2);
\draw [dashed] (v2)--(v3);
 \node at (0,-1)[vertex] (v4){};
 \node at (0,-2)[vertex] (v5){};
 \node at (0,-3)[vertex] (v6){};
 \node at (0,-5)[vertex] (v7){}; 
\draw (v0)--(v4)--(v5)--(v6);
\draw [dashed] (v6)--(v7);
 \node at (1,-1)[vertex] (v8){};
 \node at (2,-2)[vertex] (v9){};
 \node at (3,-3)[vertex] (v10){};
 \node at (5,-5)[vertex] (v11){}; 
\draw (v0)--(v8)--(v9)--(v10);
\draw [dashed] (v10)--(v11);
\node at (-1.5,-1) {$P_0$};
\node at (1,-5) {$P_1$};
\node at (4,-3
) {$P_k$};

  \end{tikzpicture}
\end{center}

The tree $T$ has order $(k+1)a$, which is just one more than the amount
of vertices in Lemma \ref{t1}.
Now we show $T$ is not $A$-burnable. Otherwise, there exists
$v_1,v_2,\ldots,v_k$ such that $T$ is covered by $\cup_{i=1}^k
N_a[v_i]$.
By Pigeon-hole principle, one of the paths  $P_0,P_1,\ldots,P_k$  will
not contain $v_1,v_2,\ldots,v_k$, and the leaf vertex on this path is 
in any $N_{a-1}[v_i]$. Thus, $T$ is not $A$-burnable.
\end{proof}

 The following corollary is a slight improvement of Theorem 7 of \cite{burn4}.
\begin{corollary}
  For any connected graph $G$, $b(G)\leq 
\frac{-3+\sqrt{8n+17}}{2}\approx \sqrt{2n}-\frac{3}{2}$.
\end{corollary}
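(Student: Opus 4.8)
The plan is to reduce to spanning trees and then apply Lemma~\ref{l1} to the arithmetic progression $A=[k]=\{1,2,\ldots,k\}$. Recall that the burning number is governed by spanning trees, $b(G)=\min\{b(T):T\text{ a spanning subtree of }G\}$, and that $b(G)$ is the least $k$ for which $G$ (equivalently, some spanning tree of $G$) is $[k]$-burnable. Thus, to prove the corollary it suffices to exhibit an integer $k$, no larger than the claimed bound permits, for which every tree on $n$ vertices is $[k]$-burnable; any such $k$ is automatically an upper bound for $b(G)$.

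For the choice $A=[k]$ one has $\sum_{i=1}^{k}a_i=\frac{k(k+1)}{2}$ and $\max_i a_i=k$, so Lemma~\ref{l1} asserts that every tree of order at most
$$\frac{k(k+1)}{2}+k-1=\frac{k^2+3k-2}{2}$$
is $[k]$-burnable. Consequently a spanning tree of $G$, which has exactly $n$ vertices, is $[k]$-burnable whenever
$$n\leq\frac{k^2+3k-2}{2},$$
which is the quadratic inequality $k^2+3k-(2n+2)\geq 0$.

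It then remains to select the smallest integer $k$ satisfying this inequality. The positive root of $k^2+3k-(2n+2)=0$ is
$$\frac{-3+\sqrt{9+8(n+1)}}{2}=\frac{-3+\sqrt{8n+17}}{2},$$
so the inequality holds exactly when $k$ is at least this value; taking $k=\left\lceil\frac{-3+\sqrt{8n+17}}{2}\right\rceil$ yields $b(G)\leq k$, and a first-order expansion of the square root produces the advertised estimate $\sqrt{2n}-\frac{3}{2}$. The argument is essentially a single substitution into Lemma~\ref{l1}, so I anticipate no genuine difficulty; the only point demanding care is the boundary bookkeeping, namely verifying that the least integer exceeding $\frac{-3+\sqrt{8n+17}}{2}$ is indeed the correct threshold and that this rounding does not exceed what the stated bound allows.
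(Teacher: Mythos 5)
Your proposal is correct and takes essentially the same route as the paper: apply Lemma~\ref{l1} with $A=\{1,2,\ldots,k\}$, so that every tree of order at most $\frac{k(k+1)}{2}+k-1=\frac{k^2+3k-2}{2}$ is $[k]$-burnable, solve the resulting quadratic, and transfer the bound from trees to graphs via the spanning-tree reduction. If anything, your explicit choice $k=\left\lceil\frac{-3+\sqrt{8n+17}}{2}\right\rceil$ is the more honest bookkeeping, since what this argument (and the paper's) actually yields is the ceiling of the stated expression, the paper simply suppressing that rounding step.
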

\begin{proof}
  Let $A=\{k,k-1,\cdots,1\}$. By Lemma \ref{l1}, any Tree of order
  $n\leq (\sum_{i=1}^ki) + k-1=
\frac{k^2+3k-2}{2}$ is $A$-burnable. Solving $k$ we get
  $k\leq\frac{-3+\sqrt{8n+17}}{2}$. Thus,
  $b(T)\leq\frac{-3+\sqrt{8n+17}}{2}$. By Equation \eqref{eq:3}, the
  same bound holds true for $b(G)$.
\end{proof}

\section{Proof of Theorems \ref{t1} and \ref{t2}}
We have seen that Lemma \ref{l1} is sharp when all $a_i$'s are equal.
The improvement can be made when $a_i$'s are distinct. Let $g(A)$
be a function of $A$ so that any tree $T$ with order at most $g(A)$
is $A$-burnable. In the proof of Lemma \ref{t1}, we show
that 
$$g(A)\leq g(A\setminus \{a_k\})+a_k.$$
The idea is to show a recursive bound 
$$g(A)\leq \max_{1\leq i\leq k-1}\{g(A\setminus \{a_i\})+a_i\}
+\left\lfloor\frac{k-1}{3}\right\rfloor$$
where $k$ is the number of (distinct) elements in $A$.
We first prove the following Lemma.
 
\begin{lemma}\label{l2}
  For any $k-1$ distinct positive integers $a_1<a_2<\cdots<a_{k-1}$, there
  exists an $a_i$ such that $2\lfloor\frac{k-1}{3}\rfloor\leq a_i \leq
  a_{k-1}-\lfloor\frac{k-1}{3}\rfloor$.
\end{lemma}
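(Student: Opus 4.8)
The plan is to prove this by a direct counting (pigeonhole) argument. Write $m=k-1$ and set $t=\lfloor m/3\rfloor$, so the target interval is $[2t,\,a_{m}-t]$ and the task is to locate some $a_i$ inside it. First I would record two elementary consequences of the hypothesis that $a_1<a_2<\cdots<a_m$ are \emph{distinct positive integers}: each index satisfies $a_i\ge i$, and in particular $a_m\ge m\ge 3t$, where the last inequality holds because $3\lfloor m/3\rfloor\le m$. This already shows the interval is nonempty, since $a_m-t\ge 3t-t=2t$.

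Next I would bound the number of $a_i$ that \emph{fail} to lie in $[2t,\,a_m-t]$. Call an index \emph{small} if $a_i<2t$ and \emph{large} if $a_i>a_m-t$. Because the $a_i$ are distinct positive integers, the small ones take values among $\{1,2,\ldots,2t-1\}$, so there are at most $2t-1$ of them; likewise the large ones take values among $\{a_m-t+1,\ldots,a_m\}$, so there are at most $t$ of them.

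Finally I would combine these counts. If no $a_i$ were in $[2t,\,a_m-t]$, then every index would be either small or large, forcing $m\le (2t-1)+t=3t-1$. But $m\ge 3t>3t-1$, a contradiction. Hence at least one $a_i$ lies in the desired interval.

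The argument is short, so I do not expect a substantive obstacle; the only care needed is in the off-by-one counting, namely verifying that exactly $2t-1$ integers lie strictly below $2t$, that exactly $t$ integers lie strictly above $a_m-t$, and that $3t-1<m$ follows from $t=\lfloor m/3\rfloor$. The degenerate case $t=0$ (equivalently $k\le 3$) is absorbed automatically, since then the interval becomes $[0,\,a_m]$, which contains every $a_i$.
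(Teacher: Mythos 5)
Your proof is correct and follows essentially the same route as the paper's: both partition $[1,a_{k-1}]$ into the values below $2\lfloor\frac{k-1}{3}\rfloor$, the target interval, and the values above $a_{k-1}-\lfloor\frac{k-1}{3}\rfloor$, then observe by pigeonhole that the at most $2\lfloor\frac{k-1}{3}\rfloor-1$ small values and at most $\lfloor\frac{k-1}{3}\rfloor$ large values cannot account for all $k-1$ distinct integers. Your additional remarks (nonemptiness of the interval via $a_{k-1}\ge k-1$, and the degenerate case $\lfloor\frac{k-1}{3}\rfloor=0$) are harmless refinements of the same argument.
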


\begin{proof}
  Let $j=\lfloor\frac{k-1}{3}\rfloor$ and
  $A=\{a_1,a_2,\ldots,a_{k-1}\}$. Divide $[1,a_{k-1}]$ into $3$ intervals: 
$$[1,2j-1]\cup[2j,a_{k-1}-j]\cup[a_{k-1}-j+1,a_{k-1}].$$ There are at most $2j-1$ elements  of  $A$ in the first interval. There are at most $j$ elements of
  $A$ in the last interval. Since $3j-1<k-1$, there exists at least one
  element of $A$ in the middle interval. Call this element $a_i$.
\end{proof}

\begin{lemma}\label{l3}
  For all integer $k\geq 1$.
$$\sum_{i=1}^k \left\lfloor\frac{i-1}{3}\right\rfloor =\left\lfloor\frac{k^2-3k+2}{6}\right\rfloor.$$
\end{lemma}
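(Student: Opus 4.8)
The plan is to reduce the identity to an elementary residue computation. First I would reindex the left-hand side by setting $j=i-1$, turning the claim into
$$\sum_{j=0}^{k-1}\left\lfloor\frac{j}{3}\right\rfloor=\left\lfloor\frac{k^2-3k+2}{6}\right\rfloor.$$
The summand $\lfloor j/3\rfloor$ is constant on each block $\{3t,3t+1,3t+2\}$, where it equals $t$, so the left-hand side is a sum of complete blocks plus a short remainder determined by $k \bmod 3$. This strongly suggests splitting into the three cases $k=3m$, $k=3m+1$, and $k=3m+2$.

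In each case I would compute the left-hand side in closed form. For $k=3m$ there are exactly $m$ complete blocks contributing $3(0+1+\cdots+(m-1))=\tfrac{3m(m-1)}{2}$. For $k=3m+1$ and $k=3m+2$ I add the one or two extra terms (each equal to $m$), obtaining $\tfrac{m(3m-1)}{2}$ and $\tfrac{m(3m+1)}{2}$ respectively. Crucially, in all three cases one of the factors $m(m-1)$, $m(3m-1)$, or $m(3m+1)$ is even, so each expression is already an integer; this is exactly what makes the implicit floor on the left side disappear.

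Then I would verify that the right-hand side matches. The cleanest bookkeeping is to show, in each case, that $0\le (k^2-3k+2)-6S<6$, where $S$ denotes the (now integer) value of the left-hand side, since this is precisely the assertion that $\lfloor(k^2-3k+2)/6\rfloor=S$. Substituting $k=3m+r$ one finds the difference $(k^2-3k+2)-6S$ equals $2$ when $r=0$ and $0$ when $r\in\{1,2\}$, both lying in the interval $[0,6)$, which closes the argument. An equally viable alternative is induction on $k$: passing from $k$ to $k+1$ increases the left side by $\lfloor k/3\rfloor$ and the numerator on the right by $2(k-1)$, and one checks $\lfloor (N+2k-2)/6\rfloor-\lfloor N/6\rfloor=\lfloor k/3\rfloor$ with the same residue casework.

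The only real obstacle is the interaction between the floor function and the division by $6$ on the right; no deep idea is required, but one must keep the residue bookkeeping consistent, in particular confirming that the left-hand totals are genuine integers so that the two sides are being compared exactly rather than being off by one.
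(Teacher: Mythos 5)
Your proposal is correct and follows essentially the same route as the paper: a three-way case split on $k \bmod 3$, computing the left-hand sum in closed form for each residue and matching it against the right-hand floor. The only difference is that you make explicit the final verification (checking $0\le (k^2-3k+2)-6S<6$), a step the paper's proof leaves to the reader when it asserts each closed form equals $\left\lfloor\frac{k^2-3k+2}{6}\right\rfloor$.
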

\begin{proof}
For $k=3s$, we have
$$\sum_{i=1}^k\left\lfloor\frac{i-1}{3}\right\rfloor= 3\sum_{j=1}^s (j-1)
=\frac{3s(s-1)}{2}=\left\lfloor\frac{k^2-3k+2}{6}\right\rfloor.$$
For $k=3s+1$, we have
$$\sum_{i=1}^k\left\lfloor\frac{i-1}{3}\right\rfloor= 3\sum_{j=1}^s (j-1) +s
=\frac{3s(s-1)}{2}+s=\left\lfloor\frac{k^2-3k+2}{6}\right\rfloor.$$
For $k=3s+2$, we have
$$\sum_{i=1}^k\left\lfloor\frac{i-1}{3}\right\rfloor= 3\sum_{j=1}^s (j-1) +2s
=\frac{3s(s-1)}{2}+2s=\left\lfloor\frac{k^2-3k+2}{6}\right\rfloor.$$

\end{proof}

\begin{theorem}\label{t3}
   Let $A$ be a  set of $k$ distinct positive
  integers $a_1<a_2<\cdots<a_k$. If a tree $T$ has order at
  most $$\left(\sum_{i=1}^ka_i\right) + a_{k}-1+ \left\lfloor\frac{k^2-3k+2}{6}\right\rfloor.$$
  then $T$ is $A$-burnable.
\end{theorem}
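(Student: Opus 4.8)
The plan is to induct on $k$, mirroring the pruning argument of Lemma \ref{l1} but removing a carefully chosen \emph{middle} value $a_i$ together with a subtree that is fatter than a single geodesic. Write $f(A)$ for the claimed bound and set $j=\lfloor\frac{k-1}{3}\rfloor$. The base case $k=1$ is exactly Lemma \ref{l1}, since then $f(A)=2a_1-1$ and the floor term vanishes.

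For the inductive step I would first record the arithmetic that drives everything. For any $i<k$ the largest element $a_k$ survives in $A\setminus\{a_i\}$, so by Lemma \ref{l3}
\begin{align*}
f(A)-f(A\setminus\{a_i\}) &= a_i + \left\lfloor\tfrac{k^2-3k+2}{6}\right\rfloor-\left\lfloor\tfrac{(k-1)^2-3(k-1)+2}{6}\right\rfloor\\
&= a_i + \left\lfloor\tfrac{k-1}{3}\right\rfloor = a_i+j.
\end{align*}
Hence it suffices to produce, for some index $i<k$, a vertex $c$ and a \emph{prunable} subtree $S$ of $T$ (the set of all descendants of some vertex, so that $T\setminus S$ is again a tree) with $S\subseteq N_{a_i-1}[c]$ and $|S|\ge a_i+j$. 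Indeed then $|T\setminus S|\le f(A)-(a_i+j)=f(A\setminus\{a_i\})$, the inductive hypothesis makes $T\setminus S$ be $(A\setminus\{a_i\})$-burnable, and adjoining the center $c$ for the value $a_i$ burns all of $T$.

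To locate such an $S$, I would dispose of the short case first: rooting $T$ at any vertex $r$, if the height is at most $a_k-1$ then $T\subseteq N_{a_k-1}[r]$ is already $\{a_k\}$-burnable, hence $A$-burnable. Otherwise fix a deepest leaf $u$ at depth $h\ge a_k$ and choose, via Lemma \ref{l2} applied to $a_1<\cdots<a_{k-1}$, an index $i<k$ with $2j\le a_i\le a_{k-1}-j$. Let $q$ be the ancestor of $u$ at distance $a_i-1$ (it exists since $a_i-1<a_k\le h$), and let $S$ be the subtree rooted at $q$. Because $u$ is globally deepest, every descendant of $q$ lies within distance $a_i-1$ of $q$, so $S\subseteq N_{a_i-1}[q]$ automatically, and the geodesic from $q$ to $u$ already gives $|S|\ge a_i$. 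If in fact $|S|\ge a_i+j$ we prune and recurse as above.

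The hard part is the remaining \emph{thin} case $|S|\le a_i+j-1$, where $S$ is within $j-1$ vertices of being the bare path $q,\dots,u$. Here I would trade height for width: since $a_i\ge 2j$ there is slack to slide the covering ball down the geodesic and prune a longer, near-path subtree rooted higher up, covered by a single radius-$(a_i-1)$ ball centered at an interior vertex, because a path of length up to $2(a_i-1)$ contributes $2a_i-1\ge a_i+j$ vertices once $a_i\ge j+1$. The delicate point, and the main obstacle, is controlling the side branches hanging off this longer path so that the entire prunable subtree still fits inside one ball of radius $a_i-1$ while retaining at least $a_i+j$ vertices; it is precisely to reserve the two buffers needed for this balancing (distance $2j$ of room below and $j$ above, up to $a_{k-1}$) that Lemma \ref{l2} was engineered to return an $a_i$ in the middle third of the range. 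Once this dichotomy is made quantitative, every case yields a prunable $S$ with $|S|\ge a_i+j$, and the induction closes.
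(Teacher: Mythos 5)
Your reduction framework is sound and is in fact the paper's: the arithmetic $f(A)-f(A\setminus\{a_i\})=a_i+j$ for $i<k$, the short-height case, and your ``fat'' case (prune the subtree rooted at the $(a_i-1)$-ancestor of a deepest leaf when it has at least $a_i+j$ vertices) all match the paper's proof. But your thin case, which you yourself flag as ``the hard part,'' is a genuine gap, and the plan you sketch for it cannot be repaired, because you insist on keeping the \emph{same} index $i$: you look for a larger prunable subtree that still fits in a ball of radius $a_i-1$, and no such subtree need exist. Concretely, suppose the parent $x$ of $v_i$ (your $q$, the ancestor of $u$ at distance $a_i-1$) has a second child whose subtree is a bare path of depth $a_i-1$; this is consistent with $u$ being a deepest leaf. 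Then the subtree rooted at $v_i$ is a bare path of $a_i$ vertices (your thin case), while every prunable subtree properly containing it is rooted at $x$ or above, hence contains two branches of depth $a_i$ below $x$ and has diameter $2a_i$; no ball of radius $a_i-1$ covers it, no matter where you slide the center. So ``controlling the side branches'' is not a technical nuisance to be quantified away; it is an impossibility.

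The missing idea is that when coverage by $N_{a_i-1}[v_i]$ fails, one must switch to the index $k-1$. The paper's dichotomy is not on the size of the subtree rooted at $v_i$ but on whether the subtree $T_1$ rooted at $t$, the ancestor of $u$ at distance $a_i-1+j$, satisfies $T_1\subseteq N_{a_i-1}[v_i]$. If yes, prune $T_1$ with index $i$ and center $v_i$; it has at least $a_i+j$ vertices since it contains the path from $t$ to $u$. If no, there is a $z\in T_1$ with $d(v_i,z)\ge a_i$, and $z$ hangs off the $u$-path at a vertex $w$ between $v_i$ and $t$, so the branch toward $z$ contributes $d(w,z)\ge a_i-j\ge j$ extra vertices --- this is exactly what the lower buffer $a_i\ge 2j$ of Lemma \ref{l2} is for. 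Now take the ancestor of $u$ at distance $a_{k-1}-1$ (the paper's $v_k$): its subtree contains the path down to $u$ ($a_{k-1}$ vertices) \emph{and} the branch toward $z$ (at least $j$ more, since $a_i-1+j\le a_{k-1}-1$ by the upper buffer of Lemma \ref{l2}), so it has at least $a_{k-1}+j$ vertices, and it lies in $N_{a_{k-1}-1}[v_k]$ automatically because $u$ is deepest. Pruning it removes $a_{k-1}$ from $A$ and closes the induction; in the example above, this larger ball absorbs both deep branches, which is precisely what your fixed-radius plan could not do.
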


\begin{proof}
  Let $f(k):=\lfloor\frac{k^2-3k+2}{6}\rfloor$. By Lemma \ref{l3}, we have
$f(k)=f(k-1)+ \lfloor\frac{k-1}{3}\rfloor$. Now we use induction on $k$.

Initial case $k=1$: $A=\{a_1\}$.
by Lemma \ref{l1}, if a tree $T$ has order at most
$2a_1-1$, then $T$ is $\{a_1\}$-burnable. The statement holds true for
$k=1$ since $f(1)=0$.

Now assume this statement holds true for any set of $k-1$ distinct
positive integers. Consider the case $A=\{a_1,a_2,\ldots,a_k\}$. We
need to prove that if a tree $T$ has order at most
$a_1+a_2+\cdots+2a_{k}-1+f(k)$ then $T$ is $A$-burnable.

 Let $j=\lfloor\frac{k-1}{3}\rfloor.$ By Lemma
\ref{l1}, there exists $a_i$ that satisfies $2j\leq a_i \leq
a_{k-1}-j$.
 Choose an arbitrary root $r$ and view $T$ as a rooted tree.
Let $u$ be the leaf vertex which has the
farthest distance
away from the root $r$. If $d(r,u)\leq a_k-1$, then $V(T)\subset N_{a_k-1}(r)$;
thus $T$ is $A$-burnable. So, we can assume $d(r,u)\geq a_k$.
 We will name three vertices $v_i,t,v_k$
on the $ru$-path such that $d(u,v_i)=a_i-1$, $d(u,t)=a_i-1+j$, and
$d(u,v_k)=a_{k-1}$. 
Let $T_1$ be the subtree rooted at $t$. 
There are two cases:

\begin{center}
  \begin{tikzpicture}
    [scale= 0.5, vertex/.style={circle,scale=0.5,
 draw=black,
      fill=black}]
   \node at (0,0)[vertex] (r) {};
\node at (-2,-2)[vertex] (vk) {};
\node at (-3,-3)[vertex] (t) {};
\node at (-4,-4)[vertex] (w) {};
\node at (-5,-5)[vertex] (v1) {};
\node at (-1,-6)[vertex] (w2) {};
\node at (-7,-7)[vertex] (u) {};
\node at (6,-7)[vertex] (r1) {};
\draw (r)--(vk)--(t)--(w)--(v1)--(u);
\draw (r)--(r1);
\draw (w)--(w2);
\node at (-0.7,0.3) {$r$};
\node at (-2.7,-1.7) {$v_k$};
\node at (-3.7,-2.7) {$t$};
\node at (-4.7,-3.7) {$w$};
\node at (-5.7,-4.7) {$v_i$};
\node at (-7.7,-7) {$u$};
\node at (-1.5,-6) {$z$};
  \end{tikzpicture}
\end{center}

\begin{description}
\item[Case 1:]
$T_1\subseteq N_{a_i-1}[v_i]$.
Let $T_2=T\setminus T_1$.
Notice $|T_1|\geq a_i+j$. Then, 
\begin{align*}
  |T_2|&\leq|T|-|T_1|\\
&\leq a_1+a_2+\cdots+2a_k-1+f(k)-(a_i+j)\\
&=  a_1+a_2+\cdots+\hat a_i+\cdots+2a_k-1+f(k-1).
\end{align*}
By inductive hypothesis, $T_2$ is $(A\setminus\{a_i\})$-burnable.
Thus, $T$ is $A$-burnable.

\item[Case 2:] $T_1\not\subseteq N_{a_i-1}[v_i]$.
Then there is a vertex $z\in T_1$ such that $d(v_i, z)\geq a_i$.
Let $w$ be the closest vertex on the path $rt$ to $z$.
Observe that $w$ is not in the subtree rooted at $v_i$. Thus, $w$ is between
$v_i$ and $t$.
We have
$$d(w,z)=d(v_i,z)-d(v_i,w)\geq a_i-d(w,v_i)\geq a_i-d(v_i,t)\geq a_i-j\geq j.$$
The last inequality uses Lemma \ref{l2} for the choice of $a_i$.
 
Let $v_k$ be a vertex on the path from $u$ to the
root with distance $d(u,v_k)$. Let $T_3$ be the subtree rooted at
$v_k$ and let $T_4:=T\setminus T_3$ be the remaining subtree. We have that $|T_3|\geq
a_{k-1}+ d(w,z)\geq a_{k-1}+j$.

\begin{align*}
  |T_4|&\leq|T|-|T_3|\\
&\leq a_1+a_2+\cdots+2a_{k}-1+f(k)-(a_{k-1}+j)\\
&=  a_1+a_2+\cdots+a_{k-2}+2a_k-1+f(k-1).
\end{align*}
By inductive hypothesis, $T_4$ is
$(A\setminus\{a_{k-1}\})$-burnable. Clearly, $T_3$ is $\{a_{k-1}\}$-burnable.
Putting together, $T$ is $A$-burnable. 
\end{description}
The inductive proof is finished.
\end{proof}
\begin{proof}{Proof of Theorem \ref{t1}}
Let $A=(1,2,\ldots,k).$ Applying Theorem \ref{t3}, any tree of $n$
vertices is $[k]$-burnable if $$n\leq
1+2+\dots+k+ k-1+\left\lfloor \frac{(k^2-3k+2)}{6} \right\rfloor
=\left\lfloor \frac{2k^2+3k-2}{3} \right\rfloor.$$ 
Note that $\left\lfloor \frac{2k^2+3k-2}{3} \right\rfloor$ equals to $\frac{2k^2+3k-3}{3}$
if $k$ is divisible by $3$; equals to $\frac{2k^2+3k-2}{3}$ otherwise.
In either case, $G$ is $[k]$-burnable if
 $n\leq \frac{2k^2+3k-3}{3}$. Solving for $k$, we have $k\geq
\frac{-3+\sqrt{24n+33}}{4}$. Since $k$ is an integer, we can take ceiling on 
the bound of $k$.
Thus for any tree $T$ of $n$ vertices, $$b(T)\leq
\left \lceil\frac{-3+\sqrt{24n+33}}{4}\right \rceil.$$ By equation
\eqref{eq:3}, the same bound holds for all connected graphs $G$.
\end{proof}

\begin{lemma}\label {l4}
  If $G$ is connected and the radius satisfies $r(G)\geq 3$, then the complement $\bar G$ is also
  connected and $r(\bar G)\leq 2$.
\end{lemma}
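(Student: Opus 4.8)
The plan is to reduce everything to a single pair of far-apart vertices and then exhibit an explicit center of eccentricity at most $2$ in $\bar G$. First I would record the standard inequality $r(G)\le D(G)$, so that the hypothesis $r(G)\ge 3$ immediately yields $D(G)\ge 3$; hence there is a pair of vertices $x,y$ with $d_G(x,y)\ge 3$. The two facts I would extract from this pair are that $x$ and $y$ are non-adjacent in $G$ (so that $x\sim y$ in $\bar G$) and, more importantly, that $x$ and $y$ have \emph{no common neighbor} in $G$ --- for a common neighbor $z$ would give a path $x$--$z$--$y$ of length $2$, contradicting $d_G(x,y)\ge 3$.

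The heart of the argument is then to show that $x$ itself serves as a center of $\bar G$ with $\mathrm{ecc}_{\bar G}(x)\le 2$. I would take an arbitrary vertex $z\neq x$ and split into two cases according to the adjacency of $z$ and $x$ in $G$. If $z$ is non-adjacent to $x$ in $G$, then $z\sim x$ in $\bar G$ and $d_{\bar G}(x,z)=1$. If $z$ is adjacent to $x$ in $G$, then $z$ cannot also be adjacent to $y$ in $G$ (otherwise $z$ would be a common neighbor of $x$ and $y$), so $z\sim y$ in $\bar G$; combining this with $x\sim y$ in $\bar G$ yields the length-$2$ walk $x$--$y$--$z$ in $\bar G$, whence $d_{\bar G}(x,z)\le 2$. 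Since this covers every $z\neq x$, we obtain $\mathrm{ecc}_{\bar G}(x)\le 2$, which simultaneously shows that $\bar G$ is connected and that $r(\bar G)\le\mathrm{ecc}_{\bar G}(x)\le 2$.

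I do not anticipate a serious obstacle: the whole proof rests on the elementary observation that two vertices at distance at least $3$ share no neighbor, with the vertex $y$ acting as a universal relay through which $x$ reaches every vertex it misses in $\bar G$. It is worth noting that the argument in fact only uses $D(G)\ge 3$ rather than the full strength of $r(G)\ge 3$; the radius hypothesis is convenient solely because, via $r(G)\le D(G)$, it guarantees the required pair $x,y$. The one bookkeeping point I would verify is the vertex count, but $D(G)\ge 3$ already forces $|G|\ge 4$, so $\bar G$ is a graph on at least four vertices and no degenerate boundary cases arise.
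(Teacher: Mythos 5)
Your proof is correct and follows essentially the same argument as the paper: both pick a pair of vertices at distance at least $3$ in $G$, use one of them as a center of $\bar G$, and route every $G$-neighbor of that center through the other vertex of the pair (which can be no common neighbor's partner) via a length-$2$ path in $\bar G$. The only cosmetic difference is notation --- the paper calls the pair $(u,v)$ and centers at $v$, while you center at $x$ and relay through $y$.
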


\begin{proof}
  Since $r(G)\geq 3$, there exists a pair of vertex $(u,v)$ with
  distance at least $3$. Let $S$ be the set of all neighbors of $v$ in the graph
  $G$. For any vertex not in $S\cup \{v\}$, it is directly connected to $v$ in
  the complement graph $\bar G$. For any vertex $x$ in $S$, both $xu$ and $uv$
are edges of $\bar G$. Thus, the complement
  graph $\bar G$ has radius at most $2$.
\end{proof}
\begin{proof}[Proof of Theorem \ref{t2}:]
By Lemma \ref{l4}, either $r(G)$ or $r(\bar G)$ is at
most $2$. Without loss of generality, we can assume $r(\bar G)\leq 2$,
which implies $b(\bar G)\leq 3$. We have the following cases.
\begin{description}
  \item[case 1]
$n\leq 4$. Since both $G$ and $\bar G$ are connected, the only graph
$G$ that can exist is the path $P_4$. In this case $G=\bar G=P_4$.
 Note, $b(P_4)=2$. This satisfies $$b(G)\cdot b(\bar G)=4 < n+4.$$

\item[case 2]
$n\geq 5$. By Theorem \ref{t1}, $ b(G_n)\leq
\left \lceil\frac{-3+\sqrt{24n+33}}{4}\right
\rceil$. $$b(G)\cdot b(\bar G) \leq 3\cdot \left \lceil\frac{-3+\sqrt{24n+33}}{4}\right
\rceil.$$ Now we show this bound is at most $n+4$.
When $n=5,6,7,8$, $\lceil\frac{-3+\sqrt{24n+33}}{4}
\rceil =3$, so $3\cdot 3=9\leq n +4$. It holds for $n=5,6,7$.

Now we assume $n\geq 9$, we use $\lceil\frac{-3+\sqrt{24n+33}}{4}
\rceil \leq \frac{-3+\sqrt{24n+33}}{4}+1$.  It is sufficient to show
$$\frac{-3+\sqrt{24n+33}}{4}+1<n+4.$$
A simple calculation yields $0< n^2-7n-8$. This true is for all $n\geq 9$.
\end{description}

From above argument, the equality holds only when $n=5$ and $b(G)=b(\bar G)=3$.
Now assume $n=5$. If $G$ contains a vertex $v$ of degree $3$ or $4$, then
$b(G)\leq 2$ since we $N[v]$ can covers at least 4 vertices. Thus
 all degrees of $G$ are at most $2$. For the same reason, all degrees of $\bar G$
are at most $2$. This implies that all degrees in $G$ and in $\bar G$ are exactly $2$.
Since both $G$ and $\bar G$ are connected and $n=5$, the only possible case is 
$G=\bar G=C_5$.
\end{proof}




\end{document}